%
%
%

%
\documentclass[12pt]{amsart}
\usepackage{amssymb, amstext, amscd, amsmath}
\usepackage{amsthm}
\usepackage{graphicx}
\usepackage{txfonts}
\newtheorem{thm}{Theorem}[section]
\newtheorem{cor}[thm]{Corollary}

\newtheorem{lem}[thm]{Lemma}

\theoremstyle{definition}
\newtheorem{rem}[thm]{Remark}

\newtheorem{defn}[thm]{Definition}

\numberwithin{equation}{section}




\begin{document}

\title[Critical Heegaard surfaces obtained by self-amalgamation]
{Critical Heegaard surfaces obtained by self-amalgamation}

\author{Qiang E}
\address{School of Mathematical Sciences, Dalian University of
Technology, Dalian 116024, China} \email{eqiangdut@gmail.com}

\thanks{This research is supported by a key grant (No.10931005) of
NSFC and a grant (No.11028104) of NSFC }

\author{Fengchun Lei}
\address{School of Mathematical Sciences, Dalian University of
Technology, Dalian 116024, China} \email{ffcclei@yahoo.com.cn}

\subjclass[2000]{Primary 57M50}

\keywords{critical Heegaard surface, self-amalgamation, surface
bundle }

\begin{abstract}Critical surfaces can be regarded as topological index 2
minimal surfaces which was introduced by David Bachman. In this
paper we give a sufficiently condition and a necessary condition
for self-amalgamated Heegaard surfaces to be critical.
\end{abstract}
\maketitle

\section{Introduction}
Let $F$ be a properly embedded, separating surface with no torus
components in a compact, orientable, irreducible 3-manifold $M$,
dividing $M$ into two submanifolds. Then the $\emph{disk complex}$,
$\Gamma(F)$, is defined as follows:

(1) Vertices of $\Gamma(F)$ are isotopy classes of compressing disks
for $F$.

(2) A set of $m+1$ vertices forms an $m-$simplex if there are
representatives for each that are pairwise disjoint.

David Bachman explored the information which is contained in the
topology of $\Gamma(F)$ by defining the $\emph{topological index}$
of $F$ \cite{3}. If $\Gamma(F)$ is non-empty then the topological
index of $F$ is the smallest $n$ such that $\pi_{n-1}(\Gamma(F))$ is
non-trivial. If $\Gamma(F)$ is empty then $F$ will have topological
index $0$. If $F$ has a well-defined topological index (i.e.
$\Gamma(F)=\emptyset$ or non-contractible) then we will say that $F$
is a \emph{topologically minimal surface}.

By definition, $F$ has topological index 0 if and only if it is
incompressible, and has topological index 1 if and only if it is
strongly irreducible. Critical surfaces, which are also defined by
David Bachman\cite{1}\cite{4},  can be regarded as topological index
2 minimal surfaces\cite{4}.

\begin{defn}\cite{4}
  $F$ is \emph{critical} if the compressing disks for $F$ can be partitioned into two sets $C_{0}$ and $C_{1}$, such that

  (1) for each $i=0,1$, there is at least one pair of disks $V_{i}, W_{i}\in C_{i}$ on opposite sides of $F$ such that $V_{i}\cap W_{i}=\emptyset$;

  (2) if $V\in C_{0}$ and $W \in C_{1}$ are on opposite sides of $F$ then $V\cap W\neq\emptyset$.
\end{defn}

Some critical Heegaard surfaces have been constructed by Jung Hoon
Lee.

\begin{thm}\emph{\cite{9}}
 The standard minimal genus Heegaard splitting of (closed orientable surface)$\times S^{1}$ is a critical Heegaard splitting.
 \end{thm}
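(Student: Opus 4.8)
The plan is to work with an explicit model of the standard splitting and verify by hand the three conditions in Bachman's definition of a critical surface, using the $S^{1}$-direction to organise the compressing disks of $F$.

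\emph{An explicit model.} Write $M=\Sigma\times S^{1}$ with $\Sigma$ closed orientable of genus $g$ and $S^{1}=\mathbb{R}/\mathbb{Z}$; fix a fibre $\Sigma\times\{0\}$. Give $\Sigma\times I$ its standard product Heegaard splitting $V'\cup_{G'}W'$, where $V'$ is a compression body with $\partial_{-}V'=(\Sigma\times\{0\})\sqcup(\Sigma\times\{1\})$ and a single $1$-handle joining the two ends, and $W'\cong(\Sigma\setminus D^{2})\times I$ is a genus-$2g$ handlebody. Re-gluing $\Sigma\times\{0\}$ to $\Sigma\times\{1\}$ and then tubing $G'$ to itself along a short vertical arc $\alpha$ lying in the collar of the glued fibre inside $V'$ produces the self-amalgamated surface $F$: both complementary pieces are genus-$(2g+1)$ handlebodies, one being $(\Sigma\setminus D^{2})\times[-\epsilon,\epsilon]$ (a punctured collar of the glued fibre) plus the handle inherited from $V'$, the other being a genus-$2g$ product handlebody plus the tube $N(\alpha)$; this $F$ is the standard minimal-genus Heegaard surface of $M$. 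I would also record the orientation-preserving involution $\sigma$ of $M$ that rotates the $S^{1}$-factor by $1/2$ while applying an involution of $\Sigma$; one can arrange $\sigma(F)=F$ and $\sigma(V)=W$, exchanging the two complementary handlebodies $V$ and $W$.

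\emph{Normalising disks and building the partition.} First I would put an arbitrary essential compressing disk $E$ for $F$ into normal form with respect to the fibres $\Sigma\times\{0\}$ and $\Sigma\times\{1/2\}$: isotoping $E$ in $M$ to minimise the number of intersection curves with these two fibres and surgering away trivial curves, using that $M$ is irreducible and each fibre $\Sigma\times\{t\}$ is incompressible. The goal of this step is a classification: up to isotopy in $M$, every essential disk is carried into a standard position that reveals it as belonging to one of exactly two natural families, the two being interchanged by the construction's symmetry; these families are declared to be $C_{0}$ and $C_{1}$. Granting the classification, condition (1) is then routine — each $C_{i}$ visibly contains a meridian of one of the handlebodies together with a meridian of the other that is disjoint from it — and $\sigma$ halves the bookkeeping since $C_{1}=\sigma(C_{0})$ once the labelling is fixed.

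\emph{The main obstacle.} Essentially all the content lies in two places. The first is the normal-form/classification step just described: one must show not only that every essential compressing disk can be pushed into one of the two families (so that $C_{0}\cup C_{1}$ exhausts the disk set) but also that no disk is forced into both, so that the partition is well defined; the disks are genuinely complicated — the ``thin tube'' pictures of the obvious meridians are misleading once one insists the two handlebodies fill all of $M$ — so this requires a careful Haken-style analysis of $E\cap(\Sigma\times\{0,1/2\})$ together with the structure of $V'$ and $W'$ as products with one extra handle. The second is condition (2): if $V_{0}\in C_{0}$ and $W_{0}\in C_{1}$ lie on opposite sides of $F$ then $V_{0}\cap W_{0}\neq\emptyset$. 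Here the argument should be that a disjoint pair of compressing disks on opposite sides of $F$ always belongs, after isotopy, to a single family — any attempt to split such a pair between $C_{0}$ and $C_{1}$ produces, through the interaction of $\partial V_{0}$ and $\partial W_{0}$ on $F$ with the self-amalgamation tube (equivalently with the handle inherited from $V'$), either a reducing sphere in the irreducible $M$ or a contradiction with incompressibility of the fibres. I expect this linking-type argument, and pinning down the precise two families, to be the crux; the case $g=1$ is Bachman's genus-three splitting of $T^{3}$ and can be used as a guide.
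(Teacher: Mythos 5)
Your write-up is a plan rather than a proof: the two steps that you yourself say carry ``essentially all the content''---the claim that every compressing disk can be normalised into exactly one of two families, and the verification of condition (2) of criticality---are precisely the statements that constitute the theorem, and neither is carried out; they are deferred to ``a careful Haken-style analysis'' and ``a linking-type argument'' that are not supplied. Worse, the classification step as you frame it is not a viable route. Bachman's definition only requires a \emph{partition} of the compressing disks, so one should define $C_0$ and $C_1$ by an explicitly checkable property of a disk, not by first classifying all compressing disks of a genus-$(2g+1)$ Heegaard surface of $\Sigma\times S^1$ up to isotopy into ``two natural families'': the disk complex here has infinitely many vertices, there is no reason normal position with respect to two fibres sorts them into two isotopy families, and making the word ``family'' precise is exactly the difficulty your scheme hides. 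Your proposed proof of condition (2) (a disjoint pair on opposite sides forces a reducing sphere or a compression of a fibre) is also not how the argument can go, since disjoint pairs on opposite sides of $F$ do exist in abundance; the point is to show they never split between the two sets, which is a statement about the chosen partition, not about irreducibility of $M$ alone.

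For comparison, the argument this paper generalises (Theorem 1.4, which with $\varphi=\mathrm{id}$ and $d(\varphi)=0$ gives Corollary 1.5 and hence the statement in question) regards the standard splitting of $\Sigma\times S^1$ as the self-amalgamation of the strongly irreducible type-2 splitting $V\cup_S W$ of $\Sigma\times I$, takes $D$ to be the meridian of the self-amalgamation tube, and \emph{defines} the partition by: a $W^*$-disk lies in $C_0$ iff it is disjoint from $D$, and a $V^*$-disk lies in $C_0$ iff it is isotopic into $V$ but inessential there. Condition (1) is then exhibited by explicit disks ($D$ together with a disk of $W$ missing the tube; the vertical annulus cut along the tube together with an essential disk $B$ of $V$ disjoint from it), and condition (2) is proved by outermost-arc/innermost-disk analysis against $D$ (Lemmas 2.1--2.3) combined with strong irreducibility of $V\cup_S W$. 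Note also that this working partition is not symmetric between the two sides of $S^*$, so your expectation that $C_1=\sigma(C_0)$ for an involution $\sigma$ exchanging the handlebodies is not something the proof can rely on. To repair your proposal you would need either to supply the missing classification and linking arguments in full, or (more realistically) to replace the classification by a property-based partition of the above type and redo condition (2) via strong irreducibility of the $\Sigma\times I$ splitting.
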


Jung Hoon Lee also showed that some critical Heegaard surfaces can
be obtained by amalgamating two strongly irreducible Heegaard
splittings.
\begin{thm}\emph{\cite{9}}
Let $X\cup _{S}Y$ be an amalgamation of two strongly irreducible
Heegaard splittings $V_{1}\cup _{S_{1}}W_{1}$ and $V_{2}\cup
_{S_{2}}W_{2}$ along homeomorphic boundary components of
$\partial_{-}V_{1}$ and $\partial_{-}V_{2}$. Assume that $V_{2}$ is
constructed from $\partial_{-}V_{2}\times I$ by attaching only one
1-handle. If there exist essential disks $D_{1}\subset W_{1}$ and
$D_{2}\subset W_{2}$ which persist into disjoint essential disks in
$Y$ and $X$ respectively, then $S$ is critical.
\end{thm}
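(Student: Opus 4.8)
The plan is to produce a partition of the set of compressing disks of $S$ into two families $C_{0},C_{1}$ satisfying conditions (1) and (2) in the definition of a critical surface. First I would fix the standard picture of the amalgam. Write $M=M_{1}\cup_{F}M_{2}$, where $M_{i}=V_{i}\cup_{S_{i}}W_{i}$ and $F$ is the common boundary surface $\partial_{-}V_{1}\cong\partial_{-}V_{2}$. Then $S$ is obtained from $S_{1}$ by attaching along $F$ a single tube $T$ (the unique $1$-handle of $V_{2}$), so that $Y=W_{1}\cup T$ is a compression body with $W_{1}\subset Y$; dually $S$ is obtained from $S_{2}$ by attaching $q=g(S_{1})-g(F)$ tubes (the $1$-handles of $V_{1}$), so that $W_{2}\subset X$. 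Let $E\subset Y$ be the cocore disk of $T$, with $m=\partial E\subset S$, and let $E_{1},\dots,E_{q}\subset X$ be the cocore disks of the $q$ tubes, with $c_{j}=\partial E_{j}\subset S$. These curves are pairwise disjoint; $m$ is a $V_{2}$-compressing curve of $S_{2}$, essential in $S$ and trivial in $S_{1}$, while each $c_{j}$ is a $V_{1}$-compressing curve of $S_{1}$, essential in $S$ and trivial in $S_{2}$. Cutting $Y$ along $E$ (resp. $X$ along $E_{1}\cup\cdots\cup E_{q}$) recovers $W_{1}$ (resp. $W_{2}$), and compressing $S$ along $E$ (resp. along $\bigcup_{j}E_{j}$) produces the surface $S_{1}$ (resp. $S_{2}$), realizing $M_{1}=V_{1}\cup_{S_{1}}W_{1}$ with $V_{1}\cup_{F}M_{2}$ on the other side (resp. $M_{2}=V_{2}\cup_{S_{2}}W_{2}$ with $M_{1}\cup_{F}V_{2}$ on the other side). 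Finally, the hypothesis yields compressing disks $D_{1}^{*}\subset W_{1}\subset Y$ and $D_{2}^{*}\subset W_{2}\subset X$ of $S$, obtained from $D_{1},D_{2}$ by pushing their boundaries off the tubes, with $D_{1}^{*}\cap D_{2}^{*}=\emptyset$.

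Next I would set $C_{0}$ to be the set of compressing disks of $S$ that are isotopic into $W_{1}$ or into $W_{2}$ and essential there, and $C_{1}$ the set of all remaining compressing disks; equivalently, a disk $D\subset Y$ lies in $C_{0}$ exactly when $\partial D$ can be isotoped in $S$ off $m$ without becoming isotopic to $m$, and symmetrically for $D\subset X$ and $c_{1}\cup\cdots\cup c_{q}$. Hence $E,E_{1},\dots,E_{q}\in C_{1}$ and $D_{1}^{*},D_{2}^{*}\in C_{0}$. Condition (1) is then immediate: $(D_{1}^{*},D_{2}^{*})$ is a disjoint pair on opposite sides inside $C_{0}$, and $(E,E_{1})$ is one inside $C_{1}$. (The case $q=0$ cannot occur under the hypotheses: then $S_{1}=F$ and $S=S_{2}$, so $D_{1}^{*}$ and $D_{2}^{*}$ would be disjoint compressing disks on opposite sides of $S_{2}$, contradicting strong irreducibility of $V_{2}\cup_{S_{2}}W_{2}$.)

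The substance of the proof is condition (2). Suppose for contradiction that $A\in C_{0}$ and $B\in C_{1}$ lie on opposite sides with $A\cap B=\emptyset$; by the $X\leftrightarrow Y$ symmetry assume $A\subset Y$, so that $A$ is an essential disk of $W_{1}$ and $B\subset X$. If $\partial B$ can be isotoped off $m$ (this keeps it off $\partial A$, which already misses $m$; the case where it cannot is treated below), then compressing $S$ along $E$ turns $S$ into $S_{1}$ and $B$ into a compressing disk of $S_{1}$ lying in $V_{1}\cup_{F}M_{2}$; a standard innermost-disk argument, using incompressibility of $F$ in $V_{1}$ and in $M_{2}$, isotopes $B$ into $V_{1}$, so that $B$ becomes a $V_{1}$-side compressing disk of $S_{1}$ disjoint from the $W_{1}$-side disk $A$ — contradicting strong irreducibility of $V_{1}\cup_{S_{1}}W_{1}$ (a coincidence $\partial A\simeq\partial B$ would force $S_{1}$ reducible, impossible since $S_{1}$ is strongly irreducible). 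So $\partial B$ meets $m$ essentially; since $B\in C_{1}$, after minimizing $|\partial B\cap(c_{1}\cup\cdots\cup c_{q})|$ we have either $\partial B\simeq c_{j}$ for some $j$ — then $B\simeq E_{j}$, and $A\cap E_{j}=\emptyset$ again contradicts strong irreducibility of $V_{1}\cup_{S_{1}}W_{1}$ — or $\partial B$ meets some $c_{j}$ essentially. The case $A\subset X,\ B\subset Y$ is handled identically, now using strong irreducibility of $V_{2}\cup_{S_{2}}W_{2}$; here the one-$1$-handle hypothesis on $V_{2}$ is essential, for it guarantees that $E$ is the unique compressing disk of $S$ coming from $V_{2}$, so no combinatorics of $V_{2}$-disks intervenes.

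The step I expect to be the main obstacle is the last remaining case of (2): a disk $B\subset X$ whose boundary meets both $m$ and $c_{1}\cup\cdots\cup c_{q}$ essentially and is nonetheless disjoint from the boundary of an essential disk $A$ of $W_{1}$. The obvious attack — surgering $B$ along the disks $E_{j}$ to reduce $|\partial B\cap\bigcup_{j}c_{j}|$ and fall back on the earlier cases — is delicate, because the arcs of the $c_{j}$ used in these surgeries can meet $\partial A$ (indeed $\partial A$ must meet every $c_{j}$, by strong irreducibility of $V_{1}\cup_{S_{1}}W_{1}$), so the surgered disks need not remain disjoint from $A$. I would handle this by choosing the pair $(A,B)$ together with the disk system $\{E,E_{1},\dots,E_{q}\}$ in simultaneous minimal position, or, more robustly, by replacing the ad hoc surgeries with an analysis of $B$ against a sweepout of $M$ adapted to the amalgamation, in the spirit of Bachman's theory of topologically minimal surfaces, so that every simplification of $B$ is forced to be innermost and hence cannot disturb the condition $A\cap B=\emptyset$.
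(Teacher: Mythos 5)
First, a bookkeeping point: the statement you are proving is Theorem 1.3 of this paper, which is quoted from Lee's article \cite{9}; the present paper gives no proof of it (it only proves the self-amalgamation analogue, Theorem 1.4). So the relevant comparison is with Lee's argument and with the closely parallel proof of Theorem 1.4 here, and in both of those the partition of disks is deliberately \emph{asymmetric}: on one side $C_0$ consists only of the ``new'' disks created by the tube (the cocore $E$ and its relatives), while the old persisting disks on that side are thrown into $C_1$; the two cross-intersection claims are then provable, one by a surface-topology argument about strongly essential arcs, the other by an outermost-disk surgery plus strong irreducibility of one of the original splittings. Your partition is the symmetric one (``old $W_1$- or $W_2$-disks'' versus ``everything else''), and that is where the trouble lies.

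The gap is not just the unresolved case you flag at the end; condition (2) actually fails for your partition, so no amount of ``simultaneous minimal position'' or sweepout technology can close it. Take $A=D_2^{*}\in C_0\cap X$ and build $B\subset Y=W_1\cup T$ as the band sum of two parallel copies of $D_1^{*}$ along a band whose core runs from $\partial D_1^{*}$ over the tube $T$ once and back. Then $\partial B$ has algebraic intersection number $\pm 2$ with $m=\partial E$, so $B$ is essential in $Y$ and cannot be isotoped off $E$, i.e.\ $B\in C_1$; and the band can be chosen disjoint from $\partial D_2^{*}$: since $\partial D_2^{*}$ must meet $m$ (strong irreducibility of $V_2\cup_{S_2}W_2$), the complementary rectangles of $\partial D_2^{*}$ in the tube wall alternate between the at most two components of $S\setminus\partial D_2^{*}$, so there is a rectangle lying in the same component as $\partial D_1^{*}$ (recall $\partial D_1^{*}\cap\partial D_2^{*}=\emptyset$), and the band can be routed through it. The result is a disjoint pair $A\in C_0$, $B\in C_1$ on opposite sides of $S$, violating (2). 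Note this counterexample lives in the direction you dismissed as ``handled identically'' ($A\subset X$, $B\subset Y$): there the outermost-disk surgery produces an essential disk in $W_1$, which is on the \emph{same} side as $A$'s original splitting surface is not, so no strong-irreducibility contradiction is available --- the asymmetry of $Y=W_1\cup T$ versus $X$ is real and cannot be argued away by symmetry. (Your exclusion of the $q=0$ case has a smaller gap of the same flavour: $D_1^{*}$ is then a disk in $V_2\cup_F M_1$, not in $V_2$, so it does not directly contradict strong irreducibility of $V_2\cup_{S_2}W_2$.) To repair the proof you must redesign the partition along the lines of \cite{9} or of Theorem 1.4 here, e.g.\ keeping in $C_0$ on the tube side only the disks that become inessential when the tube is removed, so that the pair type exploited above (old disk versus tube-crossing disk on the opposite side) is never required to intersect.
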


The following theorem is the main result of this paper, which states
that some critical Heegaard surfaces can be obtained by
self-amalgamating strongly irreducible Heegaard splittings. Terms in
the theorems will be defined in Section 2.
\begin{thm}\
 Suppose $M$ is an irreducible 3-manifold with two homeomorphic boundary
  components $F_{1}$ and $F_{2}$, and $V\cup _{S}W$ is a strongly irreducible Heegaard splitting of M such that $F_{1}\cup F_{2}\subset
\partial_{-}W$.
 Suppose $M$ admits an essential disk $B$ in $V$ and two
spanning annuli $A_{1}$, $A_{2}$ in $W$, such that $\partial B$,
$\partial_{1} A_{1}$, $\partial_{1} A_{2}$ are disjoint curves on
$S$, and $ \partial_{2}A_{i}\subset F_{i},$ for $i=1,2$. Let $M^*$=
$V^*\cup_{S^*}W^*$  be the self-amalgamation of $M=V\cup _{S}W$,
  such that $\partial_{2}A_{1}$ is identified with $\partial_{2}A_{2}$. Then $S^*$ is a critical Heegaard surface of $M^*$.
\end{thm}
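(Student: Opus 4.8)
The plan is to exhibit an explicit partition of the set of compressing disks for $S^*$ into two families $C_0$ and $C_1$ satisfying conditions (1) and (2) of the Definition, in the spirit of the proof of Theorem 1.2.

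First I would unwind the self-amalgamation. When $F_1$ is identified with $F_2$ so that $\partial_2A_1$ and $\partial_2A_2$ become a single curve $c$ on the resulting interior, non-separating surface $F$, the two annuli join along $c$ into one annulus $A=A_1\cup_c A_2$ with $\partial A=\partial_1A_1\cup\partial_1A_2\subset S$, and $S^*$ is obtained from $S$ by tubing along a spanning arc $\alpha$ of $A$; the tube lies on the $W$-side, so that $V^*=V\cup N(\alpha)$ is $V$ with one $1$-handle attached, while $W^*$ is the manifold obtained from $W$ by gluing $F_1$ to $F_2$ and deleting $N(\alpha)$. From this I would extract four distinguished compressing disks for $S^*$: the persistent disk $B^*\subset V^*$ coming from $B$, which survives and stays essential because $\partial B$ is disjoint from the curves being tubed; the co-core disk $\delta\subset V^*$ of the new $1$-handle, whose boundary is a meridian of the tube; the disk $A^*=\overline{A\setminus N(\alpha)}\subset W^*$ coming from the glued annulus, whose boundary runs once longitudinally over the tube and once around each of $\partial_1A_1,\partial_1A_2$; and a compressing disk $E\subset W^*$ which is a surviving compressing disk for $S=\partial_+W$ lying inside $W$ and chosen disjoint from $A_1\cup A_2$. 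Such an $E$ exists because, $S$ being connected, $W$ cannot be a trivial compression body and hence has $1$-handles, whose co-cores give such disks (isotoped off the two annuli).

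I would then put $B^*,A^*\in C_0$ and $\delta,E\in C_1$, later enlarging to a full partition. Condition (1) is then immediate: $B^*\cap A^*=\emptyset$ (their boundaries are disjoint on $S$ and $B^*$ avoids the tube) and $\delta\cap E=\emptyset$ ($E$ avoids the tube), so each $C_i$ contains disjoint disks on opposite sides of $S^*$. Condition (2), restricted to these four disks, concerns the two cross pairs on opposite sides of $S^*$: the pair $(B^*,E)$, which must intersect because $B\subset V$ and $E\subset W$ lie on opposite sides of the strongly irreducible surface $S$, and the pair $(\delta,A^*)$, which must intersect because $\partial\delta$ meets $\partial A^*$ in two unremovable points, being respectively a meridian and a longitude of the tube.

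The real content is (a) extending $C_0,C_1$ to all compressing disks of $S^*$ and (b) verifying condition (2) for every cross pair. For (a), given a compressing disk $\gamma$ for $S^*$, I would remove the circles of $\gamma\cap F$ by surgery (using that $F$ remains incompressible in $M^*$); if $\gamma$ can be made disjoint from $F$ then it is a compressing disk for $S$ in the manifold obtained by cutting $M^*$ along $F$, which is $M=V\cup_S W$, so by strong irreducibility it is isotopic into $V$ or into $W$, and we put it into $C_0$ or $C_1$ accordingly, while the disks that genuinely meet $F$ in arcs — those assembled from pieces of $A$, of $F$, and of the tube, like $A^*$ and $\delta$ — are classified by a direct analysis of these arcs. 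For (b), a cross pair $P\in C_0$, $Q\in C_1$ on opposite sides of $S^*$ with $P\cap Q=\emptyset$ would, after this reduction, descend to a pair of disjoint compressing disks for $S$ on opposite sides in $M$, contradicting strong irreducibility of $V\cup_S W$. I expect this descent to be the main obstacle: one must control the arcs of $\gamma\cap F$ and the way $\gamma$ crosses the tube carefully enough that a disjoint pair really does survive the cut along $F$ as a genuine weak reduction of $S$, and one must check that the classification in (a) is forced, leaving no compressing disk ambiguous in a way that would break the partition or create an illegal cross pair.
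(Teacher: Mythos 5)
Your setup and your four witness disks are the same as the paper's (up to swapping the labels $C_0$ and $C_1$): the tube cocore $\delta$ (the paper's $D$), a compressing disk of $W$ missed by the tube, the persistent disk $B$, and the disk $\overline{A\setminus N(\alpha)}$, and condition (1) for these is verified exactly as in the paper. But the content of the theorem is precisely the two items you defer, and your sketch for them does not work as stated. First, you never actually define the partition: ``put it into $C_0$ or $C_1$ accordingly'' is not a rule (and note that strong irreducibility does not let you isotope a given compressing disk of $S^*$ ``into $V$ or into $W$'' --- it already lies in $V^*$ or in $W^*$). The paper's partition is an explicit dichotomy: a compressing disk of $V^*$ goes to one class iff it can be isotoped into $V$ but is inessential there (e.g.\ $\delta$), otherwise to the other; a compressing disk of $W^*$ goes to one class iff its boundary is disjoint from $\partial\delta$, otherwise to the other. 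Without such a rule there is nothing to check in condition (2).

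Second, your plan for condition (2) --- surger off circles of intersection with $F$ and let a disjoint cross pair descend to a weak reduction of $S$ in $M$ --- fails exactly for the pairs that matter. One of the two nontrivial cross pairs consists of a $V^*$-disk that is inessential in $V$ (such as $\delta$ or any disk whose boundary bounds a pair of pants with the two tube disks $\partial D_1,\partial D_2$ in $S$) against a $W^*$-disk meeting $\delta$: the $V^*$-disk yields no compressing disk of $S$ at all, so no weak reduction can be extracted; the paper proves this case (its Claim 3) combinatorially, using the lemma from \cite{11} that any compressing disk of $W^*$ meeting $D$ contains a strongly essential arc in $S_1=S-\mathrm{int}(D_1\cup D_2)$, and such an arc must cross the boundary of every disk essential in $V^*$ but inessential in $V$. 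The other cross pair (its Claim 4) pits a $W^*$-disk disjoint from $D$ against a $V^*$-disk that may run over the tube many times; here the descent needs the incompressibility of $F-\mathrm{int}\,D$ in $W^*$ (Lemma 2.1) to push the $W^*$-disk back into $W$, and an outermost-disk argument (Lemma 2.3) showing that an outermost piece of the $V^*$-disk cut along $D$ is already essential in $V$, so that strong irreducibility applies to that subdisk; circle surgery along $F$ alone does not produce the contradiction. So the ``main obstacle'' you flag is in fact the whole proof, and closing it requires the strongly-essential-arc machinery of \cite{11} (or an equivalent analysis), together with a genuinely well-defined partition.
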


As a corollary, we show a generalized result of Theorem 1.2.
\begin{cor}
Let F be a closed, connected, orientable surface, and let $\varphi
:F \rightarrow F$ be a surface diffeomophism which preserves
orientation. If $d(\varphi)\leq 2$, then the standard Heegaard
surface of the surface bundle $M(F,\varphi)$ is critical.
\end{cor}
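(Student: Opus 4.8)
The plan is to obtain $M(F,\varphi)$ by self-amalgamating the standard Heegaard splitting of $F\times I$, and then to verify the hypotheses of Theorem 1.4; the only nontrivial input is that $d(\varphi)\le 2$ forces two curves related by $\varphi$ not to fill $F$.

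First I would fix the building block. Write $M=F\times I$, $F_1=F\times\{0\}$, $F_2=F\times\{1\}$, choose $x_0\in F$ and a small disk $D\subset F$ about $x_0$, and let $W$ be a regular neighbourhood of $F_1\cup F_2\cup(\{x_0\}\times I)$. Then $W$ is a compression body built from $(F_1\sqcup F_2)\times I$ by attaching one $1$-handle $h$, with $\partial_-W=F_1\sqcup F_2$; put $V=\overline{M\setminus W}$, which is canonically $(F\setminus\operatorname{int}D)\times I$, a handlebody, and $S=\partial_+W=\partial V$. This is the standard Heegaard splitting of $F\times I$ with $F_1\cup F_2\subset\partial_-W$; as recorded in Section 2 it is strongly irreducible: the only essential disk of $W$ up to isotopy is the cocore $E$ of $h$, while on the $V$-side the subsurfaces $(F\setminus\operatorname{int}D)\times\partial I$ of $S$ are incompressible in $V$ and the core of the annulus $\partial D\times I\subset S$ is not null-homotopic in $V$, so every essential disk of $V$ meets $\partial E$. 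Gluing $F_1$ to $F_2$ by $\varphi$ turns $M$ into $M(F,\varphi)$, and the self-amalgamation $S^*$ of $S$ is, by definition, the standard Heegaard surface of this bundle.

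Next I would produce the data demanded by Theorem 1.4. By the definition of $d(\varphi)$ there is an essential simple closed curve $c$ on $F$ with $d_{\mathcal C(F)}(c,\varphi(c))\le 2$. Running through the cases ($\varphi(c)$ isotopic to $c$; $\varphi(c)$ disjoint from $c$; or $c$ and $\varphi(c)$ joined by a length-two path through a third essential curve), in each case $c\cup\varphi(c)$ does not fill $F$, so some component $R$ of $F\setminus(c\cup\varphi(c))$ is not a disk. Take the disk $D$ above to lie in $R$, set $c_1=c\subset F_1$ and $c_2=\varphi(c)\subset F_2$, and let $A_i=c_i\times I\subset W$ be the corresponding spanning annuli; they are essential, $\partial_2A_i\subset F_i$, and $\varphi$ carries $\partial_2A_1$ onto $\partial_2A_2$, so the self-amalgamation identifying $\partial_2A_1$ with $\partial_2A_2$ is precisely the one producing $M(F,\varphi)$. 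Since $R$ is not a disk there is an essential arc $\beta$ in $R\setminus\operatorname{int}D$ with endpoints on $\partial D$ and disjoint from $c_1\cup c_2$; put $B=\beta\times I\subset V$, an essential disk. On $S$ the curve $\partial_1A_1$ lies in the sheet $(F\setminus\operatorname{int}D)\times\{0\}$ and $\partial_1A_2$ in the sheet $(F\setminus\operatorname{int}D)\times\{1\}$, so $\partial_1A_1\cap\partial_1A_2=\emptyset$; and $\partial B$ meets these two sheets only in parallel copies of $\beta$ (disjoint from $c_1$ and $c_2$ respectively) and otherwise lies in the handle annulus $\partial D\times I$, which is disjoint from both $c_i$. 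Hence $\partial B,\partial_1A_1,\partial_1A_2$ are pairwise disjoint on $S$, Theorem 1.4 applies, and $S^*$, the standard Heegaard surface of $M(F,\varphi)$, is critical.

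The one genuinely substantive point is the passage from $d(\varphi)\le 2$ to the non-filling of $c\cup\varphi(c)$, and hence to the essential arc $\beta$ avoiding it; the rest is bookkeeping with the product structures of $V$ and $W$, together with the facts (from Section 2) that the displayed splitting of $F\times I$ is strongly irreducible and that its self-amalgamation is the standard Heegaard surface of the bundle. I would also take care, when $\varphi(c)$ is isotopic to $c$, to keep $A_1$ and $A_2$ as two distinct annuli, one in each collar of $W$, so that Theorem 1.4 applies verbatim.
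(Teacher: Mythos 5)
Your proposal is correct and follows essentially the same route as the paper: identify the standard Heegaard surface of $M(F,\varphi)$ as the self-amalgamation of the strongly irreducible type-2 splitting of $F\times I$ and then invoke Theorem 1.4, using $d(\varphi)\le 2$ to find a curve $c$ with $c\cup\varphi(c)$ non-filling and hence the required disk $B$ and spanning annuli $A_1,A_2$. The only difference is one of detail: the paper dismisses the verification of the hypotheses of Theorem 1.4 as ``easy to see,'' whereas you spell out the construction of $D$, $\beta\times I$, and $c_i\times I$ explicitly, which is a welcome but not essentially different elaboration.
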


We also show a necessary condition for self-amalgamated Heegaard
surfaces to be critical.

\begin{thm}\
 Suppose that
$M^{*}=V^{*}\cup_{S^{*}}W^{*}$ is the self-amalgamation of
$M=V\cup_{S}W$. If $S^{*}$ is a critical Heegaard surface of
$M^{*}$, then $d(S)\leq 2$.
\end{thm}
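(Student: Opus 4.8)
The plan is to argue by contraposition: assuming $d(S)\ge 3$, I will show $S^*$ cannot be critical, i.e.\ that every partition $C_0\sqcup C_1$ of the compressing disks of $S^*$ fails one of the two defining conditions.

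First I would record the structure of the amalgamated surface. Since $M^*$ is obtained from $M$ by gluing $F_1$ to $F_2$, the surface $S^*$ is $S$ with a tube attached whose core $\gamma$ is a properly embedded arc in $M^*$ running from $S$ through the product region $F\times I$ coming from the collars of $F_1$ and $F_2$ and back to $S$. Let $\beta$ be the cocore disk of this tube, so $\beta$ is an essential disk of $V^*$ (say on the $V^*$-side; the other case is symmetric), and set $\mu=\partial\beta\subset S^*$. Using that $M$ is irreducible and that $F_1,F_2$ are incompressible in $M$ (being $\partial_-$-components of the compression body $W$), an innermost-disk reduction of the intersection of a compressing disk with $F\times\{0\}$ should give: (i) every essential disk of $V$ in $M$, and every essential disk of $W$ in $M$ disjoint from the part of $\gamma$ lying in $W$, persists to an essential disk of $V^*$, resp.\ $W^*$, that is disjoint from $\beta$; and (ii) conversely, an essential disk of $W^*$ (resp.\ $V^*$) which can be isotoped off $\beta$ is isotopic to the persistence of such a disk of $W$ (resp.\ $V$). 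In short, a compressing disk of $W^*$ is disjoint from $\beta$ exactly when it ``descends'' to a compressing disk of $W$ in $M$, and likewise on the $V$-side.

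Next, fix a critical partition $C_0\sqcup C_1$, with guaranteed disjoint opposite pairs $V_0,W_0\in C_0$ and $V_1,W_1\in C_1$, and say $\beta\in C_0$. Since $\beta$ is disjoint from every descending disk, condition~(2) of the definition forces all descending disks into $C_0$: a descending disk in $C_1$ would be a $C_1$-disk on the opposite side of $S^*$ from $\beta\in C_0$ and disjoint from it, contradicting~(2). Hence $C_1$ contains no disk descending to a compressing disk of $M$; in particular $V_1$ cannot be isotoped off $\beta$ and $\partial W_1$ cannot be isotoped off $\mu$ — both $V_1$ and $W_1$ ``use the tube''. Note this step uses only the location of $\beta$, not the distance hypothesis.

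The heart of the matter is then to contradict the existence of the disjoint pair $V_1,W_1\in C_1$ using $d(S)\ge 3$. Because $V_1$ meets $\beta$ essentially and $\partial W_1$ meets $\mu$, I would repeatedly $\partial$-surger $V_1$ along outermost subdisks of $\beta$, and $W_1$ along outermost subdisks of the companion surface $(F\times\{0\})\cap W^*$ (whose boundary is also $\mu$), each move strictly reducing intersections with $\mu$; the process terminates in essential disks $\widehat V_1$ of $V$ and $\widehat W_1$ of $W$ in $M$. Tracking the modifications, the new boundary curves differ from $\partial V_1,\partial W_1$ only inside a collar of $\mu$, where the new arcs are parallel to sub-arcs of $\mu$; since $\partial V_1$ and $\partial W_1$ were disjoint, a careful choice of the surgery arcs and of the sides to which the $\mu$-parallel copies are pushed should leave $\partial\widehat V_1$ and $\partial\widehat W_1$ either disjoint on $S$, or both disjoint from one essential curve on $S$ arising from an arc of $\mu$ that survives once the tube is undone. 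In either case $d(S)\le 2$, the desired contradiction. The main obstacle is precisely this last piece of bookkeeping: the two surgery processes can genuinely interfere along $\mu$, and it is controlling that interference that produces the bound $d(S)\le 2$ rather than $d(S)\le 1$. (An alternative, less self-contained route would be to invoke Bachman's theorem that a critical surface can be isotoped to meet the incompressible surface $F\subset M^*$ in curves essential on both, and then analyze the generalized Heegaard decomposition of $M$, viewed as $M^*$ cut along $F$, that this induces from $S^*$.)
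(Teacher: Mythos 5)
There is a genuine gap, and it sits exactly where you flag it yourself. Your overall skeleton matches the paper's: place the tube's cocore disk $\beta$ (the paper's $D$) in $C_0$ without loss of generality, observe via condition (2) that the guaranteed disjoint pair $V_1\subset V^*$, $W_1\subset W^*$ in $C_1$ must have $\partial W_1$ meeting $\partial\beta$ while $V_1$ is not isotopic to $\beta$ (automatic, since $\beta\in C_0$ and the $C_i$ partition isotopy classes), and then conclude $d(S)\le 2$. But that last implication is the entire technical content of the theorem, and your proposal does not prove it: the paper obtains it by quoting a lemma of Zou--Du--Guo--Qiu (Lemma 4.1 here), which says that if $d(S)\ge 3$ then any disk of $V^*$ not isotopic to $D$ and any disk of $W^*$ meeting $\partial D$ intersect in at least two points. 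Your substitute --- simultaneously $\partial$-surgering $V_1$ along outermost pieces of $\beta$ and $W_1$ along the punctured fiber, then claiming the resulting disks of $V$ and $W$ are ``either disjoint or both disjoint from one essential curve'' --- is asserted (``should leave''), not established, and you concede that controlling the interference of the two surgery processes along $\mu$ is the main obstacle. Until that bookkeeping is done (essentially reproving the cited lemma, e.g.\ via the strongly essential arc machinery of Lemmas 2.2--2.3), the proof is incomplete.

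A secondary, logical error: you claim condition (2) forces \emph{all} descending disks into $C_0$, including those on the $V^*$-side. Condition (2) only constrains pairs on \emph{opposite} sides of $S^*$, so a $V^*$-disk disjoint from $\beta$ is not excluded from $C_1$ by this argument; indeed it need not be excluded at all, and your later premise that ``$V_1$ meets $\beta$ essentially'' is therefore unjustified. Fortunately the argument does not need it: what is needed is only that $W_1$ meets $D$ and that $V_1$ is not isotopic to $D$, which is how the paper applies its Lemma 4.1. Relatedly, your claim (ii) is also too strong on the $V^*$-side: a compressing disk of $V^*$ disjoint from $\beta$ isotopes into $V$ but may be inessential there, a distinction the paper's construction in Theorem 1.4 makes explicitly.
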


\section{Preliminaries}
An essential annulus $A$ properly embedded in a compression body $C$
is called a \emph{spanning annulus} if one component of $\partial A$
denoted by $\partial_{1}A$ lies in $\partial_{+}C$, while the other
denoted by $\partial_{2}A$ lies in $\partial_{-}C$.

Let $\emph{M}$ be a compact orientable 3-manifold. If there is a
closed surface $\emph{S}$ which cuts $\emph{M}$ into two compression
bodies $\emph{V}$ and $\emph{W}$ with
$S=\partial_{+}V=\partial_{+}W$, then we say $\emph{M}$ has a
\emph{Heegaard splitting}, denoted by $M=V\cup_{S}W$; and $\emph{S}$
is called a \emph{Heegaard surface} of $\emph{M}$.

A Heegaard splitting $M=V\cup_{S}$W is said to be \emph{reducible}
if there are two essential disks $D_{1}\subset V$ and $D_{2}\subset
W$ such that $\partial D_{1}=\partial D_{2}$; otherwise, it is
\emph{irreducible}. A Heegaard splitting $M=V\cup_{S}$W is said to
be \emph{weakly reducible} if there are two essential disks
$D_{1}\subset V$ and $D_{2}\subset W$ such that $\partial
D_{1}\cap\partial D_{2}=\emptyset$; otherwise , it is \emph{strongly
irreducible}.

The \emph{distance} between two essential simple closed curves
$\alpha$ and $\beta$ in $S$, denoted by $d(\alpha, \beta)$, is the
smallest integer $n \geq 0$ such that there is a sequence of
essential simple closed curves $\alpha=\alpha_{0},
\alpha_{1}...,\alpha_{n}=\beta$ in $S$ such that $\alpha_{i-1}$ is
disjoint from $\alpha_{i}$ for $1\leq i\leq n$.

The \emph{distance} of the Heegaard splitting $V\cup_{S}W$ is
$d(S)=Min \{ d(\alpha, \beta) \},$ where $\alpha$ bounds an
essential disk in $V$ and $\beta$ bounds an essential disk in $W$.
$d(S)$ was first defined by Hempel, see \cite{7}.

Let $M$ be a compact orientable 3-manifold with homeomorphic
boundary components $F_{1}$ and $F_{2}$, and $M=V\cup_{S}W$ be a
Heegaard splitting such that $F_{1}\cup F_{2}\subset
\partial_{-}W$. Let $M^{*}$ be the manifold obtained from $M$ by
gluing $F_1$ and $F_2$ via a homeomorphism $f: F_1\rightarrow F_2$.
Then $M^{*}$ has a natural Heegaard splitting $M^{*}=V^{*}
\cup_{S^{*}}W^{*}$ called the \emph{self-amalgamation} of
$M=V\cup_{S}W$  as follows:

Let $p_{i}$ be a point on $F_{i}$ such that $f(p_1)=p_2$. Note that
$W$ is obtained by attaching 1-handles $h_1,...,h_m$ to
$\partial_{-} W\times I$. Let $\alpha_{i}=p_{i}\times I$,
$\alpha_{i}\times D$ be the regular neighborhood of $\alpha_{i}$ for
$i=1,2$. We may assume that $\alpha_{i}\times D$ is disjoint from
the 1-handles $h_1,...,h_m$, and $f(p_{1}\times D)=p_{2}\times D$.

Now, in the closure of $M^{*}-V$, the arc
$\alpha=\alpha_1\cup\alpha_2$ has a regular neighborhood
$\alpha\times D$ which intersects $\partial_{+}V=S$ in two disks
$D_1$ and $D_2$. We denote by $p$ the point $p_i$, $D$ the disk
$p\times D\subset \alpha\times D$, and $F$ the surface $F_i$ in
$M^*$. Let $V^{*}=V\cup \alpha\times D$ and $W^*$ be the closure of
$M^{*}-V^*$. $V^*$ and $W^*$ are compression bodies. Let $S^{*}$ be
$V^{*}\cap W^{*}$, then $M^{*}=V^{*} \cup_{S^{*}}W^{*}$ is a
Heegaard splitting called the self-amalgamation of $V\cup_{S}W$. It
is clear that $g(S^{*})=g(S)+1$ (Fig.1).

\begin{figure}
\centering
    \includegraphics[width=9cm]{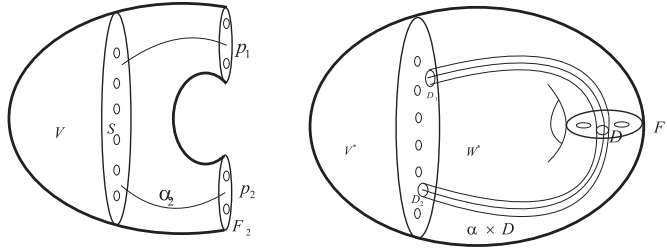}\\
  \caption{$V\cup_{S}W$ and $V^*\cup_{S^*}W^*$}\label{sf}
\end{figure}

\begin{lem}\emph{\cite{11}}
$F-int D$ is incompressible in $W^*$.
\end{lem}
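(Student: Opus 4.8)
The plan is to pass to the auxiliary manifold $\widehat W:=\overline{M^*-V}$, i.e.\ the image of $W$ in $M^*$, which is a copy of $W$ with its two negative boundary components $F_1$ and $F_2$ identified; thus $W^*=\overline{\widehat W-(\alpha\times D)}$, the surface $F$ lies in $\operatorname{int}\widehat W$ with $D\subset F$ and $F-\operatorname{int}D=F\cap W^*$, and $\alpha=\alpha_1\cup\alpha_2$ is a properly embedded arc in $\widehat W$ with endpoints $q_1,q_2\in S$ meeting $F$ transversally in the single point $p$ where it pierces $D$. First I would record two structural facts. Since $M$ is irreducible, $\partial_-W\subset\partial M$ has no $2$-sphere components, so $W$ is irreducible and $F_1,F_2$ are incompressible in $W$. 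Cutting $\widehat W$ along $F$ recovers $W$ with $F_1\sqcup F_2$ as boundary, so a compressing disk for $F$ in $\widehat W$ would descend to a compressing disk for $F_1$ or $F_2$ in $W$; hence $F$ is incompressible in $\widehat W$. The same innermost-disk argument, now applied to a $2$-sphere in $\widehat W$, shows such a sphere can be pushed off $F$ into a copy of $W$, where it bounds a ball; hence $\widehat W$ is irreducible.

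Then I would argue by contradiction. Suppose $E$ is a compressing disk for $F-\operatorname{int}D$ in $W^*$. Since $E\subset W^*$ and $W^*\cap F=F-\operatorname{int}D$ we get $E\cap F=\partial E$ for free, and $E$ is disjoint from the core $\alpha$ of the handle $\alpha\times D$. The curve $\partial E$ does not bound a disk in $F-\operatorname{int}D$. If $\partial E$ also does not bound a disk in $F$, then $E$ is a compressing disk for $F$ in $\widehat W$, contradicting the first step. Otherwise $\partial E$ bounds a disk $D'$ in $F$; since $\partial E\subset F-\operatorname{int}D$ and bounds no disk there, $D'$ must contain $D$, and I would form the embedded $2$-sphere $\Sigma:=E\cup(D'-\operatorname{int}D)\cup D$, which lies in $\operatorname{int}\widehat W$ and meets $\alpha$ transversally in the single point $p$ (as $E$ and $D'-\operatorname{int}D$ miss $\alpha$ while $D\cap\alpha=\{p\}$).

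To finish this case, I would note that $\Sigma$ is disjoint from $S$ — the disk $E$ lies in $\operatorname{int}W^*$, away from $\partial W^*$, and $D'-\operatorname{int}D$ and $D$ lie on the interior surface $F$ — so in particular $\Sigma$ misses $q_1$ and $q_2$. Since $\widehat W$ is irreducible, $\Sigma$ bounds a $3$-ball $B\subset\operatorname{int}\widehat W$, and $B$ is disjoint from $\partial\widehat W$, so neither $q_1$ nor $q_2$ lies in $B$. But then the properly embedded arc $\alpha$ joins two points of $\widehat W-\overline B$ and therefore meets the separating sphere $\partial B=\Sigma$ an even number of times, contradicting $|\alpha\cap\Sigma|=1$. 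Hence no compressing disk $E$ exists, so $F-\operatorname{int}D$ is incompressible in $W^*$.

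The hard part will be this second case: a compressing disk whose boundary is parallel to $\partial D$ is not killed by incompressibility of $F$ alone, and one must eliminate it with the sphere-and-parity argument, which in turn requires first establishing that $\widehat W$ is irreducible. There is also some routine care needed to check that $\Sigma$ is genuinely embedded, lies in $\operatorname{int}\widehat W$, and meets $\alpha$ in exactly one point — routine verifications given the construction of the self-amalgamation.
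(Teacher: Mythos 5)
Your proof is correct, but note that there is nothing in the paper to compare it with: the paper states this lemma as a quotation from \cite{11} and gives no proof of its own, so what you have produced is an independent, essentially self-contained argument. Its structure is sound. Passing to $\widehat W=\overline{M^*-V}$, a compressing disk for $F$ has interior disjoint from $F$ and hence descends to a compressing disk for $F_1$ or $F_2$ in the compression body $W$, where $\partial_-W$ is incompressible; this kills the case where $\partial E$ is essential in $F$. The residual case, $\partial E$ bounding a disk $D'\supset D$ in $F$, is exactly the one that incompressibility of $F$ alone cannot exclude, and your sphere $\Sigma=E\cup(D'-\operatorname{int}D)\cup D$ handles it correctly: $\Sigma$ is embedded, lies in $\operatorname{int}\widehat W$, meets the arc $\alpha$ transversally in the single point $p$, and the ball it bounds (using irreducibility of $\widehat W$) is disjoint from $\partial\widehat W$, hence from the endpoints of $\alpha$, so the parity count $|\alpha\cap\Sigma|=1$ gives the contradiction. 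It is also worth noting that your argument uses only compression-body facts ($\partial_-W$ incompressible in $W$, $W$ irreducible), not strong irreducibility of $V\cup_SW$ nor Casson--Gordon, so it proves the lemma in the generality in which it is stated. The one step you state too quickly is the irreducibility of $\widehat W$: ``the same innermost-disk argument'' is not quite a proof, since removing a circle of $\Sigma\cap F$ needs the ball supplied by irreducibility of $W$ and some care when other sheets of $\Sigma$ lie inside that ball (or a surgery-and-reassembly induction). This is the standard fact that gluing, or self-gluing, an irreducible manifold along incompressible boundary surfaces yields an irreducible manifold; either cite it or write out the reduction in full, but it is a known true statement and not an obstacle to your proof.
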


Let $S_1$ be the surface $S-int D_1\cup int D_2$. Then $S_1$ is a
sub-surface of $S$ with two boundary components $\partial D_1$ and
$\partial D_2$.  An essential arc $\gamma$ in $S_1$ is called
\emph{strongly essential} if both two boundary points lie in
$\partial D_i$ and $\gamma$ is an essential arc on $S_{1}\cup D_j$,
where
 \{i, j\}=\{1, 2\}.

\begin{lem}\emph{\cite{11}}
Suppose that $E$ is an essential disk in $V^*$ or $W^*$ and
$\partial E\cap\partial D\neq\emptyset$. Then there exist an arc
$\gamma\in\partial E\cap S_1$ such that $\gamma$ is strongly
essential in $S_1$.
\end{lem}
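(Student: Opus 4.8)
The plan is to prove Lemma~2.2 by an innermost-circle/outermost-arc analysis, running the two cases $E\subset V^{*}$ and $E\subset W^{*}$ in parallel. Recall the relevant pieces of the construction: $S^{*}=S_{1}\cup A$, where $A$ is the annulus formed by the lateral boundary of the tube $\alpha\times D$; the curve $\partial D$ is the core of $A$, so that $\partial A=\partial D_{1}\cup\partial D_{2}=\partial S_{1}$; the meridian disk $D$ of the tube lies in $F$ and hence in $V^{*}$; and $F^{-}:=F-\mathrm{int}\,D$ lies in $W^{*}$ and is incompressible there by Lemma~2.1. First I would fix good position. If $E\subset V^{*}$ I make $E$ transverse to $D$; if $E\subset W^{*}$ I make $E$ transverse to $F^{-}$. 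In each case, using that the compression body is irreducible together with the incompressibility of $D$ (resp.\ $F^{-}$), every simple closed curve of $E\cap D$ (resp.\ $E\cap F^{-}$) may be removed by an isotopy. Since $\partial D=\partial F^{-}$ and $\partial E\cap\partial D\neq\emptyset$, the remaining intersection is a nonempty family of arcs, each with both endpoints on $\partial E\cap\partial D$. I would additionally minimize $|\partial E\cap(\partial D_{1}\cup\partial D\cup\partial D_{2})|$ on the surface $S^{*}$, so that every component of $\partial E\cap A$ is a spanning arc of $A$ and every component of $\partial E\cap S_{1}$ is essential in $S_{1}$.

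Next I would pass to an outermost arc. Let $\beta$ be an arc of $E\cap D$ (resp.\ $E\cap F^{-}$) outermost on $E$, cutting off a subdisk $E_{0}\subset E$ with $\partial E_{0}=\beta\cup\gamma_{0}$, where $\gamma_{0}\subset\partial E$, the endpoints of $\gamma_{0}$ lie on $\partial D$, and $\mathrm{int}\,E_{0}$ is disjoint from $D$ (resp.\ $F^{-}$). The structural fact I would exploit is that cutting the tube along its meridian undoes the amalgamation: $V^{*}$ cut along $D$ is homeomorphic to $V$, and, since un-gluing $M^{*}$ along $F$ recovers $M=V\cup_{S}W$, likewise $W^{*}$ cut along $F^{-}$ is homeomorphic to $W$. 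Thus $E_{0}$ may be viewed as a disk in $V$ (resp.\ $W$), and each arc of $\gamma_{0}\cap S_{1}\subset\partial E\cap S_{1}$ has both endpoints on $\partial D_{1}\cup\partial D_{2}$; these are the candidates for being strongly essential.

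I would then argue by contradiction. Suppose no arc of $\partial E\cap S_{1}$ is strongly essential. By the definition this means that every such arc either joins the two distinct circles $\partial D_{1}$ and $\partial D_{2}$, or joins a single circle $\partial D_{i}$ to itself but is inessential in $S_{1}\cup D_{j}$, where $\{i,j\}=\{1,2\}$. I would show that in either situation the outermost disk $E_{0}$ permits an isotopy of $E$ that strictly lowers $|E\cap D|$ (resp.\ $|E\cap F^{-}|$): an arc that is inessential in $S_{1}\cup D_{j}$ cuts off a disk through which the corresponding part of $\partial E$ can be slid across the appropriate half-tube, and the incompressibility of $D$ (resp.\ of $F^{-}$, by Lemma~2.1) ensures that this move is an honest isotopy rather than the creation of a new essential disk. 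Iterating empties the intersection, after which $\partial E$ could be isotoped off $\partial D$ entirely, contradicting $\partial E\cap\partial D\neq\emptyset$ in minimal position. Hence some arc of $\partial E\cap S_{1}$ is strongly essential, as claimed.

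The step I expect to be the main obstacle is exactly this last reduction, and within it the need to exclude the purely combinatorial possibility that \emph{every} arc of $\partial E\cap S_{1}$ runs from $\partial D_{1}$ to $\partial D_{2}$ (an arc of this kind is never strongly essential). The count of intersection points alone does not rule this out, so the essentiality of $E$ must enter through the disk $D$ itself: the facts that $\partial E$ and $\partial D$ both bound disks in $V^{*}$ force $|\partial E\cap\partial D|$ to be even and their algebraic intersection on $S^{*}$ to vanish, and the outermost disk $E_{0}$ genuinely lives in $V$, resp.\ $W$, after de-tubing. These are what convert the hypothesis ``$\gamma$ is inessential in $S_{1}\cup D_{j}$'' into an actual complexity-reducing isotopy. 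Carefully matching each half-tube of $A$ to the correct capping disk $D_{j}$, and verifying that the resulting move strictly decreases the intersection number, is the technical heart of the proof.
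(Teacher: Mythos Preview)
The paper does not prove Lemma~2.2 at all: it is quoted from \cite{11} with no argument given, so there is no ``paper's own proof'' to compare against. That said, your outline follows the standard outermost-arc strategy one would expect in \cite{11}, and is essentially on the right track. The genuine issue is that you have misidentified the main obstacle.

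Your worry---that every arc of $\partial E\cap S_{1}$ might run from $\partial D_{1}$ to $\partial D_{2}$---does not survive the outermost-disk step, and the paragraph you devote to it (even/zero algebraic intersection, etc.) is unnecessary. The point is local: the outermost subdisk $E_{0}$ lies, along $\beta$, on a single side of $D$ (respectively $F^{-}$), so both endpoints of $\gamma_{0}=\partial E_{0}\cap S^{*}$ leave $\partial D$ into the \emph{same} half-annulus, say $A_{1}$. Since the interior of $\gamma_{0}$ is disjoint from $\partial D$ and, in minimal position, $\partial E$ meets $A$ only in spanning arcs, $\gamma_{0}$ cannot enter $A_{2}$ at all. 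Hence $\gamma_{0}\cap S_{1}$ is a single arc with both endpoints on $\partial D_{1}$, so case~(i) simply does not occur for the outermost piece. What remains is exactly your case~(ii): if that arc were inessential in $S_{1}\cup D_{2}$, then $\partial E_{0}$ would bound a disk on $S$ and one could isotope $E$ to reduce $|E\cap D|$ (resp.\ $|E\cap F^{-}|$), contradicting minimality. That is the actual technical heart; the combinatorial difficulty you flag dissolves once you track which half-annulus the outermost disk abuts.
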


\begin{lem}\emph{\cite{11}}
Suppose that $E$ is an essential disk in $V^*$ and $|E\cap D|$ is
minimal up to isotopy. Let $\Delta$ be any outermost disk of $E$ cut
by $E\cap D$. Then $\partial \Delta\cap S_1$ is strongly essential
in $S_1$.
\end{lem}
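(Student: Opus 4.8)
The plan is to undo the self-amalgamation by cutting $V^{*}$ along the meridian disk $D$, and to read the strong essentiality of $\partial\Delta\cap S_{1}$ off from the essentiality of the disk that the outermost piece $\Delta$ becomes in the recovered compression body $V$. Write $A\subset S^{*}$ for the lateral annulus of the amalgamating tube $\alpha\times D$, so that $S^{*}=S_{1}\cup A$, the core of $A$ is $\partial D$, and $\partial D$ splits $A$ into two sub-annuli $A_{1},A_{2}$ that are collars of $\partial D_{1}$ and $\partial D_{2}$. First I would normalise $E$: an innermost-circle argument on $D$ shows that minimality of $|E\cap D|$ makes $E\cap D$ a union of arcs, so an outermost piece $\Delta$ exists, cut off by a single arc $\beta=\partial\Delta\cap D$; write $\partial\Delta=\beta\cup\delta$ with $\delta\subset\partial E\subset S^{*}$. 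Since $\Delta$ is outermost the interior of $\delta$ misses $\partial D$, and since $\Delta$ lies on one side of $D$ the arc $\delta$ lies in the corresponding side $S_{1}\cup A_{1}$; say the $D_{1}$-side. After an isotopy supported in a collar of $\partial D_{1}$, which never crosses $D$ and so preserves $|E\cap D|$, I may assume $\delta$ meets $A_{1}$ in two spanning arcs, so that $\gamma:=\partial\Delta\cap S_{1}=\delta\cap S_{1}$ is a single arc with both endpoints on $\partial D_{1}$; this is the first requirement in the definition of strongly essential, with $i=1$, $j=2$.

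Next I would pass to the cut manifold. Cutting $V^{*}$ along $D$ reverses the amalgamation, giving $V^{*}|D\cong V$; the two copies $D^{+},D^{-}$ of $D$ sit on $S=\partial_{+}V$ as subdisks of $D_{1}$ and $D_{2}$, and $\partial_{+}(V^{*}|D)\cong S$. Under this identification $\Delta$ becomes a properly embedded disk in $V$ with $\partial\Delta\subset S$, $\beta\subset D^{+}\subset D_{1}$ and $\partial\Delta\cap S_{1}=\gamma$. The key claim is that minimality forces $\Delta$ to be essential in $V$, equivalently $\partial\Delta$ to be essential in $S$: were $\partial\Delta$ to bound a disk $\Delta'\subset S$, then, $V$ being irreducible, $\Delta$ would be $\partial$-parallel across the ball cobounded by $\Delta$ and $\Delta'$, and transporting this parallelism back to $V^{*}$ would push the finger of $E$ near $\beta$ across $D$, deleting $\beta$ and lowering $|E\cap D|$, a contradiction.

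Finally I would translate essentiality of $\partial\Delta$ into strong essentiality of $\gamma$. As $\eta:=\beta\cup(\delta\cap A_{1})$ is a single arc lying in the disk $D_{1}$, the curve $\partial\Delta=\gamma\cup\eta$ is isotopic in $S$ to $\gamma$ closed up by an arc of $\partial D_{1}$. Thus $\partial\Delta$ is inessential in $S=S_{1}\cup D_{1}\cup D_{2}$ in exactly two ways: either $\gamma$ cuts off a disk in $S_{1}$, or $\gamma$ is parallel to $\partial D_{2}$ and $\partial\Delta$ bounds a disk containing $D_{2}$. Hence essentiality of $\partial\Delta$ in $S$ is equivalent to $\gamma$ being essential in $S_{1}$ and staying essential after capping $\partial D_{2}$ with $D_{2}$, i.e.\ essential in $S_{1}\cup D_{2}$ --- which is exactly the statement that $\gamma=\partial\Delta\cap S_{1}$ is strongly essential.

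The hard part will be the essentiality claim above in the subcase where the disk $\Delta'\subset S$ bounded by $\partial\Delta$ contains the second foot $D^{-}$ of the tube: there $D^{-}$ is glued to $D^{+}$ when $V^{*}$ is reassembled, so the ball cobounded by $\Delta$ and $\Delta'$ is no longer a ball in $V^{*}$, and the reduction is not a bigon move but a slide of $E$ through the amalgamating tube. The point requiring care is to verify that this slide is a genuine ambient isotopy of $E$ in $V^{*}$ that strictly lowers $|E\cap D|$ rather than merely relocating intersections. The single-arc normalisation of the first paragraph is a secondary technical point, needed only so that the symbol $\partial\Delta\cap S_{1}$ names a single well-defined arc.
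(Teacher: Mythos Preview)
The paper does not supply its own proof of this lemma; it is quoted without argument from \cite{11}. So there is nothing in the present paper to compare your proposal against, and it must be judged on its own.

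Your strategy is the standard one and is correct: cut $V^{*}$ along $D$ to recover $V$, regard the outermost piece $\Delta$ as a properly embedded disk in $V$, show that minimality of $|E\cap D|$ forces $\partial\Delta$ to be essential in $S$, and then translate this into strong essentiality of $\gamma=\partial\Delta\cap S_{1}$. The dictionary in your final paragraph between ``$\partial\Delta$ essential in $S$'' and ``$\gamma$ essential in $S_{1}\cup D_{2}$'' is exactly right, as is the preliminary normalisation making $\gamma$ a single arc with both endpoints on $\partial D_{1}$.

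On the flagged ``hard part'': you are right that when $\Delta'\supset D^{-}$ the ball $B\subset V$ no longer yields a clean $\partial$-parallelism inside $V^{*}$, so the naive bigon move is suspect. A way to close this that avoids any ambient-isotopy subtlety is to stay in $V^{*}$ throughout. The arc $\beta$ cuts $D$ into half-disks $D',D''$; the hypothesis of this subcase (that $\gamma$ is inessential in $S_{1}\cup D_{2}$ but essential in $S_{1}$) says precisely that, for the correct choice of half $D'$, the curve $\partial(\Delta\cup D')=\delta\cup\mu$ is isotopic in $S^{*}$ to $\partial D$: the once-punctured disk in $S_{1}$ cut off by $\gamma$, glued to $A_{2}$ along $\partial D_{2}$, is an annulus realising this isotopy. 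Hence a slight pushoff $\Delta_{1}$ of $\Delta\cup D'$ is a disk in $V^{*}$ isotopic to $D$, and $E\cap\Delta_{1}$ consists of one arc for each arc of $E\cap D$ lying in $\operatorname{int}D'$, so $|E\cap\Delta_{1}|\le |E\cap D|-1$. Transporting the ambient isotopy $D\simeq\Delta_{1}$ back to $E$ then contradicts minimality of $|E\cap D|$. This replaces the problematic ``slide through the tube'' by an honest disk replacement and finishes the case you were worried about.
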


A \emph{surface bundle}, denoted by $M(F,\varphi)$, is a 3-manifold
obtained from $F \times [0, 1]$ by gluing its boundary components
via a surface diffeomorphism $\varphi: F \times \{0\}\rightarrow
F\times \{1 \}$. When $\varphi$ is the identity, $M(F,\varphi)\cong
F\times S^{1}$.

Let $F$ be a closed orientable surface with genus $g(F)\geq 2$.
Suppose that $\varphi$ is a homeomorphism of $F$. The
\emph{translation distance} of $\varphi$ is $d(\varphi)=min\{
d(\alpha, \varphi(\alpha))\} $, where $\alpha$ is an essential
simple closed curve on $F$. $d(\varphi)$ was first defined by
Bachman and Schleimer \cite{5}.

\section{Proofs of Theorem 1.4 and Corollary 1.5}
Now we give the proof of Theorem 1.4. It shows a sufficient
condition for a self-amalgamated Heegaard surface to be critical.
\begin{proof}(of Theorem 1.4.)
Since $V\cup_{S} W$ is strongly irreducible, it follows from Casson
and Gordon's theorem \cite{6} that \emph{F }is incompressible.
 Since $\partial_{2}A_{1}=\partial_{2}A_{2}$, it follows that $A_{1}\cup A_{2}$ is an essential
 annulus in $M^{*}-V$, denoted by \emph{A}. Take a spanning arc $\alpha$ in
 \emph{A}, and let  $V^{*}=V\cup \alpha\times D$ and $W^*$ be the closure
of $M^{*}-V^*$.
 Then $M^{*}=V^{*}\cup_{S^*}W^*$ is obtained by self-amalgamation of $M=V\cup_{S} W$. Now we prove that $S^*$ is a critical Heegaard surface
 of $M^*$.

Let $D$ be a compressing disk of $V^*$ corresponding to the 1-handle
$\alpha\times D$. We give a partition of the compressing disks for
$S^*$, $C_{0}\cup C_{1}$, as follows: (For the sake of convenience,
in the following statement,  `` a disk in $V^*\cap C_{i}$" means ``
a compressing disk in $V^*$ which belongs to $C_i$".)

$V^*\cap C_{0}$ consists of compressing disks in $V^*$ that could be
be isotoped into $V$ but inessential in $V$;

$W^*\cap C_{0}$ consists of compressing disks in $W^*$ that are
disjoint from $D$;

$V^*\cap C_{1}$ consists of compressing disks in $V^*$  that do not
belong to $V^*\cap C_{0}$;

$W^*\cap C_{1}$ consists of compressing disks in $W^*$  that are not
disjoint from $D$.

Each compressing disk of $S^*$ must be contained in $C_{0}$ or
$C_{1}$. Now we need to show $C_{0}\cup C_{1}$ satisfies the
definition of criticality.

     \textbf{ Claim 1.} $C_{0}$ contains a disjoint pair of disks on opposite sides of $S^*$.

 Note that $D$ belongs to $C_0$. Since $\partial_{-}W$ has two components, there exists at least one
essential disk in $W$ disjoint from $\alpha\times D$ . Hence there
exists at least one essential disk in $W^*$ which is disjoint from
$D$. This means $C_{0}$ contains disjoint compressing disks for
$S^*$ on opposite sides.

 \textbf{ Claim 2.} $C_{1}$ contains a disjoint pair of disks on opposite sides of $S^*$.

Since $\alpha$ is contained in the annulus $A$, $cl(A-(\alpha\times
D))$ is an essential disk in $W^*$ intersecting $D$ in at least two
points, so it belongs to $C_{1}$. The essential disk $B$ in $V$
persists as an essential disk in $V^*$ and belongs to $C_{1}$. By
assumption, $cl(A-\alpha\times D)$ is disjoint with $B$. This means
$C_{1}$ also contains disjoint compressing disks for $S^*$ on
opposite sides.

\textbf{ Claim 3.} Any disk in $V^*\cap C_{0}$ intersects any disk
in $W^*\cap C_{1}$.

 Let $E$ be any disk in $W^*$ that intersect with $D$. Let $D_s$ be any disk essential in $V^*$, but inessential in $V$.
Recall $D_1\cup D_2=(\alpha\times D)\cap S$.  If $\partial D_s$ is
isotopic to one of $\partial D_1$ and $\partial D_2$, then $D_s
\cong D$ and there is nothing to prove. So we suppose that $\partial
D_s$ bounds a pair of pants together with $\partial D_1$ and
$\partial D_2$. By Lemma 2.2, there is an arc $\gamma\in\partial
E\cap S_1$ such that $\gamma$ is strongly essential in $S_1$. Note
that a strongly essential arc in $S_1$ must intersect with $\partial
D_s$. Hence $E\cap D_s\neq\emptyset$.

\textbf{ Claim 4.} Any disk in $W^*\cap C_{0}$ intersects any disk
in $V^*\cap C_{1}$.

 Let $E_{0}$ be an essential disk in $W^*$
that is disjoint from $D$. After isotopy, $\partial E_{0}$ can be
made disjoint from $\alpha \times D$. By Lemma 2.1 $E_{0}$ and
$F-int D$ can be made disjoint by a standard innermost disk
argument. This means that $E_{0}$ can be regarded as an essential
disk in $W$. Let $D^{1}$ be an essential disk in $V^*$ that belongs
to $C_1$. For proving Claim 4, we need to show $D^{1}\cap
E_{0}\neq\emptyset$.

 Suppose to the contrary that $D^{1}\cap E_{0}=\emptyset$. We assume that $D^{1}$
is chosen so that the number of components of intersection $|D\cap
D^{1}|$ is minimal up to isotopy of $D^{1}$, satisfying $E_{0}\cap
D^{1}=\emptyset$. First, we suppose $|D\cap D^1|=\emptyset$. Then
$D^1$ can be regard as an essential disk in $V$. Then $D_1\cap
E_0\neq \emptyset$ since $V\cup_{S}W$ is strongly irreducible, a
contradiction.  Hence $|D\cap D^1|\neq\emptyset$. By a standard
innermost disk argument, we can
 assume $D\cap D^{1}$ consists of arc components. Let $\beta\subset S^*$ be an outermost arc component in $D^{1}$ and
 $\Delta$ be the corresponding outermost disk in $D^{1}$. Since the disk $D$ cut $V^*$ into $V$, after a small isotopy $\Delta$ lies in \emph{V}.

By Lemma 2.3, $\beta\cap S_1$ is strongly essential in $S_1$, hence
$\Delta$ is essential in \emph{V}. Since $V\cup_{S}W$ is strongly
irreducible, $\partial\Delta\cap\partial E_{0}\neq\emptyset$. It is
easy to see that $\partial E_{0}\cap\partial\Delta=\partial
E_{0}\cap\beta\subset\partial E_{0}\cap\partial D^{1}$. However, we
have assumed $E_{0}\cap D^{1}=\emptyset$, a contradiction. Claim 4
follows.

Hence  $C_{0}\cup C_{1}$ satisfies the definition of criticality.
This completes the proof of Theorem 1.4.
\end{proof}

\begin{proof}(of Corollary 1.5.)
 The standard Heegaard splitting
of a surface bundle\cite{5} is the self-amalgamation of the type 2
Heegaard spitting of \{closed surface\} $\times I$ \cite{10}. The
genus of the Heegaard surface is $2g(F)+1$. If $d(\varphi)\leq 2$,
it is easy to see $M(F,\varphi)$ satisfies the condition of Theorem
1.4.
\end{proof}

 \begin{rem}\
 There are surface bundles of arbitrarily high genus which have genus two
Heegaard splittings\cite{8}. If $M(F,\varphi)$ contains a strongly
irreducible Heegaard surface $H$, then
$d(\varphi)\leq-\chi(H)$\cite{5}. It follows that if $M(F,\varphi)$
contains an irreducible genus two Heegaard surface, then it also
contains a critical Heegaard surface.
\end{rem}
\section{A necessary condition for self-amalgamated Heegaard surfaces to be critical}
The following result could be found in the proof for the main
theorem in\cite{11}. Recall we suppose
$M^{*}=V^{*}\cup_{S^{*}}W^{*}$ is the self-amalgamation of
$M=V\cup_{S}W$ and $D$ is a meridian disk of $V^*$ corresponding to
the 1-handle attached to $V$.

\begin{lem}\emph{\cite{11}} If $d(S)\geq 3$, for each pair of disks $D^{*}\subset V^*$ and
$E^*\subset W^*$ such that $D^*$ is not isotopic to $D$ and
$\partial E^{*}\cap \partial D\neq\emptyset$, we have $|D^{*}\cap
E^{*}|\geq 2$.
\end{lem}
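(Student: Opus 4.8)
The plan is to argue by contradiction, assuming $|D^*\cap E^*|\le 1$ and extracting from this a configuration on $S$ that violates $d(S)\ge 3$. First I would observe that since $D^*\subset V^*$ and $E^*\subset W^*$ lie on opposite sides of $S^*$, their interiors are disjoint, so $D^*\cap E^*=\partial D^*\cap\partial E^*$ is a finite set of points on $S^*$; thus the assertion $|D^*\cap E^*|\ge 2$ is precisely the statement that these boundary curves can be made neither disjoint ($0$ points) nor to meet in a single point ($1$ point). I would isotope $D^*$ so that $|D^*\cap D|$ is minimal and split into cases according to how $D^*$ meets the meridian $D$.

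The case where $D^*$ is inessential in $V$ (yet essential in $V^*$) I would dispatch directly, with no appeal to the distance hypothesis. Since $D^*$ is essential in $V^*$ but not isotopic to $D$, the disk it bounds on $S$ must contain both footprints $D_1,D_2$ (containing exactly one would make $D^*$ isotopic to $D$, and containing neither would make $\partial D^*$ inessential on $S^*$), so $\partial D^*$ cobounds a pair of pants $P$ with $\partial D_1,\partial D_2$. Lemma 2.2 gives a strongly essential arc $\gamma\subset\partial E^*\cap S_1$ with both endpoints on one $\partial D_i$; were $\gamma$ to stay inside $P$ it would cap off to a boundary-parallel arc in the annulus $P\cup D_j$, contradicting strong essentiality. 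Hence $\gamma$ leaves $P$ through $\partial D^*$ and returns, so $|\partial E^*\cap\partial D^*|\ge|\gamma\cap\partial D^*|\ge 2$, as required.

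In the remaining cases I would reduce both disks to the original compression bodies and invoke $d(S)\ge3$. From $D^*$ I extract an essential disk $b$ in $V$: if $D^*\cap D=\emptyset$ then $D^*\subset V$ is itself essential with $\partial D^*\subset S_1\subset S$; if $D^*\cap D\ne\emptyset$ I take an outermost disk $\Delta$ of $D^*$ cut by $D\cap D^*$, whose boundary arc is strongly essential by Lemma 2.3 and which therefore caps off, along $\partial D$, to an essential disk in $V$. From $E^*$ I extract an essential disk $w$ in $W$ by cutting $E^*$ along the properly embedded incompressible surface $F-\mathrm{int}\,D$ (Lemma 2.1), discarding trivial circles, and taking an outermost piece, with Lemma 2.2 guaranteeing the resulting boundary is essential on $S$. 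Pushing the capping and tube arcs so that they hug the footprints inside the collar separating the tube from $\partial b$, I would arrange $|\partial b\cap\partial w|\le|\partial D^*\cap\partial E^*|\le 1$. If this number is $0$ then $\partial b$ and $\partial w$ are disjoint, so $d(\partial b,\partial w)\le1$; if it is $1$ then, since $g(S)\ge 2$, the boundary of a regular neighborhood of $\partial b\cup\partial w$ is an essential curve disjoint from both, so $d(\partial b,\partial w)\le2$. Either outcome contradicts $d(S)\ge3$, completing the proof.

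The hard part will be the reduction of the $W^*$-disk $E^*$ to a genuinely essential disk $w$ in $W$ together with the intersection bookkeeping. I must verify that the outermost piece obtained from cutting along $F-\mathrm{int}\,D$ is incompressible and not boundary-parallel in $W$ — this is exactly where the strongly-essential-arc technology of Lemmas 2.1--2.3 is needed — and that the arcs introduced by capping off over the tube can be isotoped off $\partial b$, so that no intersection points arise beyond those already in $\partial D^*\cap\partial E^*$. Controlling these capping arcs, and confirming that the neighborhood curve in the single-intersection case is essential (which uses $g(S)\ge2$), are the delicate points; the rest is a routine innermost-disk and outermost-arc analysis.
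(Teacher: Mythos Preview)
The paper does not prove Lemma~4.1; it is quoted from the argument in \cite{11}, so there is no in-paper proof to compare against. Your overall strategy---separate off the case where $D^*$ is inessential in $V$, and otherwise extract genuine compressing disks $b\subset V$ and $w\subset W$ whose boundaries meet at most once, contradicting $d(S)\ge 3$---is the natural one, and your treatment of the first case is correct. It refines Claim~3 of the proof of Theorem~1.4 by the observation that a strongly essential arc with both endpoints on $\partial D_i$ must cross the separating curve $\partial D^*$ an even, hence $\ge 2$, number of times.

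The gap is exactly where you flag it. Your proposed construction of $w$ does not work as written: an outermost piece $\Delta'$ of $E^*$ cut along $F-\mathrm{int}\,D$ has boundary consisting of an arc on $S^*$ and an arc on $F-\mathrm{int}\,D$; viewed in $W$ this is a disk with boundary partly on $\partial_-W$, not a compressing disk for $S$. One can attempt to push the $F$-arc up through the product structure to $S$, but then essentiality of the resulting closed curve on $S$ is precisely what must be shown, and Lemma~2.2 does not supply it. That lemma only asserts that \emph{some} arc of $\partial E^*\cap S_1$ is strongly essential, not that the arc bounding your chosen outermost piece is. Lemma~2.3, which does guarantee strong essentiality for \emph{every} outermost piece, is stated only for disks in $V^*$ cut by the cocore $D$; you would need a $W^*$-analogue for disks cut by $F-\mathrm{int}\,D$, and no such statement appears in the paper. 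Until that analogue is established (or an alternative construction of $w$ is given), the inequality $|\partial b\cap\partial w|\le|\partial D^*\cap\partial E^*|$ has nothing to stand on, and the contradiction with $d(S)\ge 3$ is not secured.
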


\begin{proof}
(of Theorem 1.6.) Since $S^*$ is critical, the compressing disks for
$S^*$ can be partitioned into two sets $C_{0}$ and $C_{1}$
satisfying the definition of criticality.

Assume that $D\subset V^{*}\cap C_{0}$.  Each disk in $V^{*}\cap
C_{1}$ is not isotopic to $D$ and each disk in $W^{*}\cap C_{1}$
intersects with $D$. Since $S^*$ is critical, there exists at least
one disjoint pair of disks $D^*\subset V^{*}\cap C_{1}$ and
$E^*\subset W^{*}\cap C_{1}$. By Lemma 4.1, we have $d(S)\leq 2$.
\end{proof}

\end{document}